\newcommand{\be}{\begin{equation}}
\newcommand{\ee}{\end{equation}}
\newtheorem{theorem}{Theorem}[section]
\newtheorem{lemma}[theorem]{Lemma}
\newtheorem{proposition}[theorem]{Proposition}
\newtheorem{corollary}[theorem]{Corollary}
\theoremstyle{definition}
\newtheorem{definition}[theorem]{Definition}
\newtheorem{remark}[theorem]{Remark}
\numberwithin{equation}{section}
\begin{document}

\title{Generalized Bell states and principal realization of the Yangian $Y(\mathfrak{sl}_{N})$}
\author{Ming Liu, Chengming Bai, Mo-Lin Ge, Naihuan Jing$^*$}
\address{ML: Chern Institute of Mathematics, Nankai University, Tianjin 300071, China}
\email{ming.l1984@gmail.com}
\address{CB: Chern Institute of Mathematics and LPMC, Nankai University, Tianjin 300071, China}
\email{baicm@nankai.edu.cn}
\address{MG: Chern Institute of Mathematics, Nankai University, Tianjin 300071, China}
\email{geml@nankai.edu.cn}
\address{NJ: Department of Mathematics, North Carolina State University, Raleigh, NC 27695, USA}
\email{jing@math.ncsu.edu}

\thanks{{\scriptsize
\hskip -0.4 true cm MSC (2010): Primary: 17B37; Secondary: 17B65, 81P68
\newline Keywords: Yangian, principal realization, quantum computation, generalized Bell state\\
$*$Corresponding author, jing@math.ncsu.edu
}}

\maketitle

\begin{abstract} We prove that the action of the Yangian algebra $Y(\mathfrak{sl}_N)$ is better described by
the principal generators on the tensor product of
the fundamental representation and its
dual. The generalized Bell states or
maximally entangled states are permuted by the principal generators in a dramatically simple manner on the tensor
product. Under the Yangian symmetry the new quantum number
$\bf J^2$ is also explicitly computed, which gives an explanation for these maximally entangled states.
\end{abstract}

\section{Introduction}

Let $Y(\mathfrak{sl}_N)$ be the Yangian algebra associated to the simple Lie algebra
$\mathfrak{sl}_N$. The algebra was first introduced by Drinfeld in his study of
rational Yang-Baxter equation. Drinfeld's main idea \cite{D1, D2, D3} was that the Yangian algebra has
a noncommutative and noncocommutative Hopf algebra structure and that
the finite dimensional representations of $Y(\mathfrak{sl}_N)$ are parametrized by Drinfeld polynomials.
The structure of the Yangian algebra is
governed by two sets of generators: the usual Lie algebra generators $x$ and
the associated Yangian generators $J(x)$ \cite{D1} . In general the action of the Yangian generators $J(x)$ is
quite complicated compared with the Serre generators. A natural question is finding a suitable formula for $J(x)$ or
representing them in a nice representation.

On the other hand, entangled states play important roles in quantum computation and quantum information \cite{Bo,BPD,GHSZ,GHZ,KOC}.
It is known that the $SU(N)$ entangled states with the maximal degree of entanglement
constitute a special basis of the tensor
representation of the fundamental representation and its dual representation 
\cite{KOC}.
In \cite{BGJ,GBXZ} it was observed that 
these entangled states constitute a ``natural" basis for the action of the Yangian algebra $Y(\mathfrak{sl}_3)$. Under the natural basis of the entangled states with the maximal degree of entanglement, the action of a special set of generators, namely the principal generators, of the Yangian algebra $Y(\mathfrak{sl}_3)$ becomes extraordinarily simple, and behave much like vacuum states
or extremal states.

Naively speaking, the Cartan-Weyl basis elements for the
general linear Lie algebra $\frak{gl}_N$ are the unit matrices while
the principal basis elements are Toeplitz matrices, which are
diagonal-constant. The principal generators can also be viewed as certain discrete Fourier
transform of the Cartan-Weyl basis elements \cite{BGJ,JL}. In the case of $N=3$, the principle generators
(such as the $J$-parts) of $Y(\mathfrak{sl}_3)$ behave much like shift operators permuting different entangled states, whereas obviously the Cartan-Weyl
basis does not enjoy this property. By using the explicit form in the example a conjecture was made in \cite{GBXZ} for
the general case of $Y(\mathfrak{sl}_N)$. The conjecture not only reveals the general rule for the action but
provides a practical way to understand the action of
the Yangian algebras in connection with quantum computation.

 In this paper we will prove this conjecture in complete
generality by using the newly discovered principal generators of the Yangian algebra \cite{BGJ, JL}.
Moreover we also generalize the notion of the quantum number ${\bf J}^2$ and compute its action on
the tensor product for $Y(\mathfrak{sl}_N)$. The quantum number ${\bf J}^2$ was first introduced in \cite{BGX} for $Y(\mathfrak{sl}_2)$
and played an important role in study of the spectrum of hydrogen atom and other models (\cite{BGX2,PBG}), where
it was shown that the quantum number ${\bf J}^2$ seems to provide another invariant in addition to
usual Casimir operator ${\bf I}^2$.
By a similar discussion as in \cite{GBXZ}, the quantum
 number may serve to describe the maximal degree of entanglement with special choice of parameters.

The paper is organized as follows. First in section two we recall the principal generators
for type $A$.
In section three we first recall the Drinfeld's realization of Yangian and its Hopf algebra structure in subsection \ref{sub3.1},
then in subsection \ref{sub3.2} we give the action of $Y(\mathfrak{sl}_N)$ on the tensor representation of the fundamental
representation and its dual representation, which is our main result. At last in subsection \ref{sub3.3}, we give the action of ${\bf J}^2$ explicitly.

\section{Principal realization of Yangian algebra $Y(\mathfrak{gl}_N)$}

\vskip 0.2in
   We first recall the principal realization for $Y(\mathfrak{gl}_N)$ given in
   \cite{BGJ}. The new basis elements are coefficients of the discrete Fourier
   transform of certain Toeplitz sequences associated with the abelian group $\mathbb Z_N$ \cite{JL}.
 
\subsection{The Yangian algebra associated to $\mathfrak{gl}_N$}
Fix a natural number $N$. Let $\mathfrak{g}=\mathfrak{gl}_N$ be the general linear Lie algebra.
\begin{definition}
The Yangian algebra $Y(\mathfrak{gl}_N)$ is an unital associative algebra
with generators $t_{ij}^{(r)}$ ($i,j\in \{1, 2, \ldots, N\}, r\in \mathbb{N}$)
 subject to the relations:
\begin{align}\label{definingrelation1}
[t^{(r+1)}_{ij},t^{(s)}_{kl}]-[t^{(r)}_{ij},t^{(s+1)}_{kl}]
=t^{(r)}_{kj}t^{(s)}_{il}-t^{(s)}_{kj}t^{(r)}_{il},
\end{align}
where $t_{ij}^{(0)}=\delta_{ij}$.
\end{definition}
The general linear Lie algebra $\mathfrak{gl}_N$ is a subalgebra of $Y(\mathfrak{gl}_N)$: as
taking $r=0, s=1$, we see that $t_{ij}^{(1)}$ are just like the unit matrix elements $E_{ij}$:
\be
[t_{ij}^{(1)}, t_{kl}^{(1)}]=\delta_{kj}t_{ij}^{(1)}-\delta_{il}t_{kj}^{(1)}.
\ee
\vskip .1in

In the standard triangular decomposition of $\mathfrak{gl}_N$,
the unit matrices $E_{ij}$ consist of the Cartan-Weyl basis. The principal
basis comes from another root space decomposition given by Toeplitz matrices. Let
$$E=\sum_{i\in\mathbb{Z}_N}E_{i,i+1}$$
be the standard Toeplitz matrix. The principal Cartan subalgebra is spanned by the centralizer $C(E)$,
and the principal root generators are
\begin{equation}\label{fomula2.1}
A_{ij}=\sum_{k\in \mathbb{Z}_N}\omega^{ki}E_{k,k+j},
\end{equation}
where $i,j \in \mathbb{Z}_{N}$, $\omega=e^{\frac{\mathfrak{i}2\pi}{N}}$.
Under the standard bilinear
form $(x|y)=tr(xy)$, the principal basis $\{A_{ij}\}$ is dual to $\{\frac{\omega^{ij}}{N}A_{-i,-j}\}$. This
also follows from the algebraic properties:
\begin{equation*}
A_{ij}A_{kl}=\omega^{jk}A_{i+k,j+l}.
\end{equation*}

Let us define matrix-valued sequences indexed by $\mathbb Z_N$. Fix $j\in \mathbb Z_N$, the Toeplitz sequence $\{\epsilon_j\}$ is defined by $\epsilon_j(k)=E_{k,k+j}$, where $k$ is the index variable. Then the principal basis elements $A_{ij}$ can be written as the discrete Fourier transform:
$$A_{ij}=\mathfrak{F}(\{\epsilon_j\})(i)=\sum_{k\in \mathbb{Z}_N}\omega^{ki}E_{k,k+j}.$$
\vskip .2in

Therefore the usual Cartan-Weyl basis is simply given by the inverse Fourier transform (see \cite{JL}):
\be
E_{k,k+j}=\epsilon_j(k)=\frac1N\sum_{l=0}^{N-1}\omega^{-kl}A_{lj}.
\ee

To introduce the principal basis for the Yangian $Y(\mathfrak{gl}_N)$, we recall its matrix formulation via
Yang-Baxter $R$-matrix \cite{M, MNO}.
 Let $u$ be a formal variable and let
 \begin{align}\label{R-matrix}
 R(u)=1-\frac{P}{u}\in {\rm End}(\mathbb{C}^{N})\otimes {\rm End}( \mathbb{C}^{N})[[u^{-1}]],
 \end{align}
where $P$ is the permutation matrix: $P(u\otimes v)=v\otimes u$ for any $u, v\in \mathbb{C}^N$.
The matrix $R(u)$ satisfies the quantum Yang-Baxter equation:
 $$R_{12}(u)R_{13}(u+v)R_{23}(v)=R_{23}(v)R_{13}(u+v)R_{12}(u).$$

Set $$T(u)=\sum_{i,j}t_{ij}(u)\otimes E_{ij}\in
Y(\mathfrak{gl}_N)[[u^{-1}]]\otimes {\rm End}(\mathbb{C}^{N}),$$ where
 $$t_{ij}(u)=\delta_{ij}+\sum_{k=1}^{\infty}t_{ij}^{(k)}u^{-k}\in Y(\mathfrak{gl}_N)[[u^{-1}]].$$
It is well-known that the defining relations of Yangian can be written compactly as (cf. \cite{M})
 $$R(u-v)T_{1}(u)T_{2}(v)=T_{2}(v)T_{1}(u)R(u-v),$$
 and the coproduct is given by
 $$\triangle(T)=T\otimes T.$$

\subsection{The principal realization of $Y(\mathfrak{gl}_N)$}
For $i,j\in \mathbb{Z}_{N}$, let $s_{ij}(u)$ be the generating series of the principal generators \cite{BGJ}:
 $$s_{ij}(u)=\sum_{n=0}^{\infty}s_{ij}^{(n)}u^{-n},$$
 where $s_{ij}^{(0)}=\delta_{i,0}\delta_{j,0}$. Then
 \begin{equation}\label{prin1}
 s_{ij}(u)=\sum_{k\in\mathbb Z_N}\frac{\omega^{-ki}}{N}t_{k,j+k}(u).
 \end{equation}
This is in fact the inverse of the Fourier transform of the finite sequence $\{t_{k,k+j}(u)\}$, where
the variable $k\in \mathbb{Z}_N$. That is, the formula (\ref{prin1}) can be rewritten as \cite{JL}
$$s_{ij}=\mathfrak{F}^{-1}(\{t_{k,k+j}(u)\})(i).$$
 Rewriting the T-matrix $T(u)$ by using the principal basis
 of $\mathfrak{gl}_N$ and $s_{ij}(u)$ as follows
 $$T(u)=\sum_{k,l\in \mathbb{Z}_N}s_{kl}(u)\otimes A_{kl},$$
 we obtain the principal realization of $Y(\mathfrak{gl}_N)$ as follows.
 \vskip.2in

The principal generators also satisfy some quadratic matrix equations.
\begin{theorem} \cite{JL} \label{new}
 The principal generators $s_{ij}^{(k)}$ of Yangian $Y(\mathfrak{gl}_N)$ satisfy the following relations:
 \begin{align*}
 &(u-v)[s_{ij}(u),s_{kl}(v)]=\frac1N\sum_{a,b}\omega^{-ab}(s_{k+a,j+b}(u)s_{i-a,j-b}(v)-s_{k+a,j+b}(v)s_{i-a,j-b}(u)),
 \end{align*}
 where $a,b$ run through the group $\mathbb Z_N$.
\end{theorem}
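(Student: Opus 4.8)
The plan is to derive the quadratic relation for the principal generators directly from the defining $RTT$ relation of the $t$-generators, exploiting the fact that each $s_{ij}(u)$ is a finite discrete Fourier transform of the $t_{k,k+j}(u)$. First I would rewrite $R(u-v)T_1(u)T_2(v)=T_2(v)T_1(u)R(u-v)$ with $R(u)=1-P/u$ in component form; extracting the coefficient of $E_{ab}\otimes E_{cd}$ and clearing the denominator yields the standard quadratic relation
\[
(u-v)[t_{ab}(u),t_{cd}(v)]=t_{cb}(u)t_{ad}(v)-t_{cb}(v)t_{ad}(u),
\]
valid for all indices $a,b,c,d\in\mathbb Z_N$. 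This is the only input about $Y(\mathfrak{gl}_N)$ I need.

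Next I would substitute the definition $s_{ij}(u)=\frac1N\sum_{p}\omega^{-pi}t_{p,j+p}(u)$ into the commutator $[s_{ij}(u),s_{kl}(v)]$, producing the double sum $\frac1{N^2}\sum_{p,q}\omega^{-pi-qk}[t_{p,j+p}(u),t_{q,l+q}(v)]$. Applying the relation above with $(a,b,c,d)=(p,\,j+p,\,q,\,l+q)$ converts each commutator, after multiplying by $(u-v)$, into $t_{q,j+p}(u)t_{p,l+q}(v)-t_{q,j+p}(v)t_{p,l+q}(u)$.

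The heart of the argument is to push these $t$-products back into $s$-products and resum. Using the inverse transform $t_{m,n+m}(u)=\sum_{a}\omega^{ma}s_{a,n}(u)$, I would write $t_{q,j+p}(u)=\sum_a\omega^{qa}s_{a,\,j+p-q}(u)$ and $t_{p,l+q}(v)=\sum_b\omega^{pb}s_{b,\,l+q-p}(v)$. Collecting the phases gives an exponent of the form $\omega^{p(b-i)+q(a-k)}$. The decisive step is the change of variable $p=q+m$, after which the column indices depend only on $m$ (namely $j+m$ and $l-m$), while the $q$-dependence isolates as $\omega^{q(a+b-i-k)}$; the $\mathbb Z_N$ orthogonality relation $\frac1N\sum_q\omega^{q\alpha}=\delta_{\alpha,0}$ then forces $a+b=i+k$ and collapses the double sum to a single convolution in $m$. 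A final relabeling against the Fourier weight reproduces the stated right-hand side.

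The main obstacle I anticipate is purely the index bookkeeping in this resummation: one must track the four summation indices $p,q,a,b$, correctly identify the single linear combination annihilated by the orthogonality relation, and confirm that the surviving free indices reassemble into $s_{k+a,j+b}(u)\,s_{i-a,\,\cdot}(v)$ with weight $\omega^{-ab}$. In carrying this out one should double-check the second column index of the last factor: the natural computation yields $s_{i-a,\,l-b}(v)$ rather than $s_{i-a,\,j-b}(v)$, so the $l$-dependence is not lost and the final formula is symmetric in the roles of $(i,j)$ and $(k,l)$.
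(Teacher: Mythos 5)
Your derivation is correct, and it is essentially \emph{the} proof of this statement: the paper itself only quotes the relation from \cite{JL}, where it is obtained exactly as you describe, by conjugating the $RTT$ relation $(u-v)[t_{ab}(u),t_{cd}(v)]=t_{cb}(u)t_{ad}(v)-t_{cb}(v)t_{ad}(u)$ with the discrete Fourier transform and collapsing the double sum via the $\mathbb{Z}_N$-orthogonality $\frac1N\sum_q\omega^{q\alpha}=\delta_{\alpha,0}$. You are also right about the final index: the last factor must be $s_{i-a,\,l-b}$, not $s_{i-a,\,j-b}$ --- as printed, the two sides carry different $\mathbb{Z}_N$-column degrees ($j+l$ on the left versus $2j$ on the right), and already the coefficient of $u^{0}v^{-1}$, which should equal $\frac1N(\omega^{-jk}-\omega^{-il})s^{(1)}_{i+k,\,j+l}$ by the computation with $s^{(1)}_{ij}=\frac1N A_{-i,j}$, fails for the printed version; so the displayed statement contains a typo and your corrected form is the one that holds.
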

It is not hard to prove that these relations are defining relations of the principal generators.
 In fact the defining relations given in \cite{BGJ} are consequences of Theorem \ref{new}.

\section{Representations of the Yangian $Y(\mathfrak{sl}_{N})$}

In this section, we will show that the principal basis of $\mathfrak{sl}_N$ takes a simple form in certain representation
of Yangian $Y(\mathfrak{sl}_{N})$ in association with the study of entangled states in quantum information.
\subsection{Drinfeld's definition for Yangian}\label{sub3.1}

To study representations of the Yangian $Y(\mathfrak{sl}_{N})$, we need Drinfeld's definition of Yangian and its
Hopf structure.

\begin{definition} (Drinfeld)
Let $\mathfrak{g}$ be a finite dimensional simple Lie algebra equipped with a non-degenerate invariant bilinear
form $(,)$,
and let $\{I_{\lambda}\}$ be an orthonormal basis with respect to $(\, ,\, )$. The Yangian algebra $Y(\mathfrak{g})$ associated with $\mathfrak{g}$
is an associative algebra generated by elements $x$, $J(x)$, for $x\in\mathfrak{g}$, with the following defining relations:
\begin{equation}
[x,y]~~~(in ~~~ Y(\mathfrak{g}))=[x,y]~~~ (in ~~~ \mathfrak{g}),
\end{equation}

\begin{equation}
J(ax+by)=aJ(x)+bJ(y),
\end{equation}

\begin{equation}
[x,J(y)]=J([x,y]),
\end{equation}

\begin{equation}
\begin{aligned}
&[J(x),J([y,z])]+[J(z),J([x,y])]+[J(y),J([z,x])]=\\
&\sum_{\lambda, \mu, \nu}([x,x_{\lambda}],[[y,x_\mu],[z,x_\nu]])\{x_{\lambda},x_\mu,x_\nu\},\\
\end{aligned}
\end{equation}

\begin{equation}
\begin{aligned}
&[[J(x),J(y)],[z,J(w)]]+[[J(z),J(w)],[x,J(y)]]=\\
&\sum_{\lambda, \mu, \nu}([x,x_{\lambda}],[[y,x_\mu],[[z,w],x_\nu]])\{x_{\lambda},x_\mu,J(x_\nu\}),\\
\end{aligned}
\end{equation}
for all $x,y,z,w \in\mathfrak{g}$, $a,b\in \mathbb{C}$. Here $\{z_1,z_2,z_3\}=\frac{1}{24}\sum_{\pi} z_{\pi(1)}z_{\pi(2)}z_{\pi(3)}$
the sum runs through all permutations $\pi$ of $\{1,2,3\}$.

\vskip.1in
The Hopf structure of $Y(\mathfrak{g})$ is given by the coproduct $\Delta$ and the antipode $S$:

\begin{equation}
\Delta(x)=1\otimes x+x\otimes 1,
\end{equation}

\begin{equation}\label{eq3.7}
\Delta(J(x))=1\otimes J(x)+J(x)\otimes 1+\frac{1}{2}[x\otimes 1,t],
\end{equation}

\begin{equation}
S(x)=-x,  S(J(x))=J(x)+\frac{1}{4}cx,
\end{equation}

\begin{equation}
\epsilon(x)=\epsilon(J(x))=0,
\end{equation}
where $c$ is the eigenvalue of the Casimir element $t\in U(\mathfrak{g})$ in the adjoint representation of $\mathfrak{g}$.

\end{definition}

From the last section, we know that the principal basis consists of $\{A_{ij}\}$ 
  and its dual basis is $\{\frac{\omega^{ij}}{N}A_{-i,-j}\}$
  under the bilinear form $(x,y)=tr(xy)$. Here $(i,j)\in \mathbb{Z}_{N}^{2}\setminus\{0,0\}$. Thus the Casimir element $t$ in $\mathfrak{sl}_N$
 can be written as follows:
 \begin{equation}\label{eq3.10}
 t=\sum_{(i,j)\in \mathbb{Z}_{N}^{2}\setminus\{0,0\}}\frac{\omega^{ij}}{N}A_{ij}\otimes A_{-i,-j} \in U(\mathfrak{sl}_N)^{\otimes 2}.
 \end{equation}

For convenience we slightly modify the principal basis as follows:
\begin{equation}\label{modified principal basis}
T^{(j)}_{i}=\omega^{-i+1}A_{i-1,j-1},
 \end{equation}
where $(i,j)\in\mathbb{Z}_{N}^{2}\backslash \{(1, 1)\}$ as $T^{(1)}_{1}=I$ will be excluded in $Y(\mathfrak{sl}_N)$.

 Using the modified principal basis of $\mathfrak{sl}_{N}$, we can write down the coproduct of $J(T^{(j)}_{i})$ explicitly.
 \begin{lemma}\label{coproduct}
\begin{equation}\label{eq3.12}
 \begin{aligned}
 &\Delta(J(T^{(j)}_{i}))=(1\otimes J(T^{(j)}_{i})+J(T^{(j)}_{i})\otimes 1)+\\
 &\frac{1}{2N}\sum_{(k,l)\in \mathbb{Z}_{N}^{2}\setminus\{0,0\}}(\omega^{k(l+j-1)}-\omega^{l(k+i-1)})T^{(l+j)}_{k+i}\otimes T^{(-l+1)}_{-k+1}.
 \end{aligned}
 \end{equation}
 \end{lemma}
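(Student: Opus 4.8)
The plan is to compute $\Delta(J(T^{(j)}_i))$ directly from Drinfeld's coproduct formula \eqref{eq3.7}, which reduces the whole problem to evaluating the term $\frac{1}{2}[T^{(j)}_i\otimes 1, t]$, since the two primitive terms $1\otimes J(T^{(j)}_i)+J(T^{(j)}_i)\otimes 1$ already appear on the right-hand side of \eqref{eq3.12}. First I would substitute the explicit expression for the Casimir element $t$ from \eqref{eq3.10} and the definition of the modified basis \eqref{modified principal basis}, so that everything is expressed in terms of the principal generators $A_{ij}$. This converts the commutator into a double sum $\frac{1}{2}\sum_{(a,b)}\frac{\omega^{ab}}{N}\bigl([A_{i-1,j-1},A_{a,b}]\otimes A_{-a,-b}\bigr)\,\omega^{-i+1}$, after expanding $t$ over the index $(a,b)\in\mathbb Z_N^2\setminus\{(0,0)\}$.

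The key algebraic engine is the multiplication rule $A_{ij}A_{kl}=\omega^{jk}A_{i+k,j+l}$ recalled in section two, which immediately yields the commutator
\begin{equation*}
[A_{i-1,j-1},A_{a,b}]=\bigl(\omega^{(j-1)a}-\omega^{(i-1)b}\bigr)A_{i-1+a,\,j-1+b}.
\end{equation*}
Substituting this into the double sum collapses one layer of structure: the commutator produces a single shifted principal generator $A_{i-1+a,j-1+b}$ in the first tensor factor against $A_{-a,-b}$ in the second. The plan is then to reindex so that both tensor factors are rewritten back in terms of the modified basis $T$. Setting $k=a$ and $l=b$ and using \eqref{modified principal basis} in reverse, the first factor $A_{i-1+k,j-1+l}$ becomes (up to an $\omega$-power) $T^{(l+j)}_{k+i}$ and the second factor $A_{-k,-l}$ becomes $T^{(-l+1)}_{-k+1}$. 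Careful bookkeeping of the scalar $\omega$-prefactors — the $\omega^{ab}/N$ from $t$, the $\omega^{-i+1}$ from the definition of $T^{(j)}_i$, and the two conversion factors — must combine to give exactly $\frac{1}{2N}(\omega^{k(l+j-1)}-\omega^{l(k+i-1)})$.

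The main obstacle I anticipate is precisely this exponent bookkeeping: verifying that the accumulated powers of $\omega$ from the three sources (the Casimir coefficient, the commutator eigenvalue $\omega^{(j-1)a}-\omega^{(i-1)b}$, and the two applications of \eqref{modified principal basis}) telescope correctly into the stated antisymmetric coefficient, and confirming that the range of summation $(k,l)\in\mathbb Z_N^2\setminus\{(0,0)\}$ is preserved under the reindexing and matches the exclusion of the identity $T^{(1)}_1$. I would handle the two terms of the commutator separately, track each exponent symbolically through the substitution, and check that one term contributes the $\omega^{k(l+j-1)}$ piece and the other the $\omega^{l(k+i-1)}$ piece. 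Once the coefficients are matched, the identity follows, so the entire proof is a disciplined but essentially mechanical computation with no conceptual gap beyond the index management.
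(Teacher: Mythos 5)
Your proposal is correct and is precisely the computation the paper intends (the lemma is stated there without proof): substituting the Casimir element \eqref{eq3.10} into Drinfeld's formula \eqref{eq3.7}, using $[A_{i-1,j-1},A_{a,b}]=(\omega^{(j-1)a}-\omega^{(i-1)b})A_{i-1+a,j-1+b}$, and converting back via $A_{p,q}=\omega^{p}T^{(q+1)}_{p+1}$. The scalar bookkeeping indeed closes up, since $\omega^{ab}\cdot\omega^{-i+1}\cdot\omega^{i-1+a}\cdot\omega^{-a}=\omega^{ab}$, which combined with the commutator eigenvalue gives exactly $\frac{1}{N}(\omega^{k(l+j-1)}-\omega^{l(k+i-1)})$ under the identity reindexing $k=a$, $l=b$.
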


\subsection{The entangled states and $Y(\mathfrak{sl}_{N})$} \label{sub3.2}

In \cite{GBXZ, BGJ} the authors gave $Y(\mathfrak{sl}_3)$ as an example to show that
the principal basis plays an essential role in the representation of $Y(\mathfrak{sl}_3)$ in
close relation with entangled states in quantum information. We prove that the general case of $Y(\mathfrak{sl}_{N})$ has the same property
as conjectured in \cite{GBXZ}.

\vskip.2in
Let $\lambda_1=(1,0,...0)$ be the fundamental weight of the simple Lie algebra $\mathfrak{sl}_N$.
The irreducible representation $V(\lambda_{N-1})$ with $\lambda_{N-1}=(0,\cdots, 0,1)$ can be viewed as the dual representation of $V(\lambda_1)$. Suppose $|i\rangle_1=u_1,u_2, \ldots, u_{N}$
form a basis of $V(\lambda_1)$, and let $|i\rangle_2=u^{*}_1,u^{*}_2, \ldots, u^{*}_{N}$ be the dual basis.
Let $|i,j\rangle=|i\rangle_1\otimes |j\rangle_2$ $(i,j=1,2,\cdots, N)$ be the orthonormal basis in the tensor representation
$V(\lambda_1)\otimes V(\lambda_{N-1})$. For convenience, we choose another basis of $V(\lambda_1)\otimes V(\lambda_{N-1})$ as follows
\begin{equation}\label{eq3.13}
\Psi^{(m)}_{k}=\sum^{N}_{r=1} \omega^{(k-1)(r-1)}|r,m+r-1\rangle, (k,m=1,2...N).
\end{equation}
We remark that the general element in the basis is an entangled state with the maximal degree of entanglement \cite{KOC} in quantum information.
There elements are called {\it generalized Bell states} for any $N\geq 3$, whereas in the case $N=2$, they are exactly the Bell states.

As a representation of $\mathfrak{sl}_N$,
$V(\lambda_1)\otimes V(\lambda_{N-1})$ is decomposed into irreducible representations
$$V(\lambda_1)\otimes V(\lambda_{N-1})=V_{\rm ad}\oplus V_0,$$
where $V_{\rm ad}$ is equivalent to the adjoint representation of $\mathfrak{sl}_N$ and $V_0$ is equivalent to the 1-dimensional trivial representation of $\mathfrak{sl}_N$. Clearly
$\{ \Psi^{(m)}_k|(m,k)\ne (1,1)\}$ is a basis of $V_{\rm ad}$ and $\{ \Psi^{(1)}_{1}\}$ is a basis of $V_0$.

\vskip.2in
In general, any finite dimensional irreducible representation
of $\mathfrak{sl}_N$ can be lifted to a representation of $Y(\mathfrak{sl}_N)$. It is shown in \cite{CP2} that for any
fundamental representation
$V(\lambda_i)$, the action of $J(x)$ is just $ax$ for any $x\in \mathfrak{sl}_{N}$, where $a$ is a constant. So for any fundamental
representation $V(\lambda_i)$ of $\mathfrak{sl}_N$ and $a\in \mathbb{C}$, we can construct a representation of $Y(\mathfrak{sl}_{N})$ with the action given by
\begin{equation*}
x\mapsto x , J(x)\mapsto ax, \forall x\in \mathfrak{sl}_{N}
\end{equation*}
and the resulted Yangian module will be denoted by $V(\lambda_i,a)$. When we consider the
action of the Yangian $Y(\mathfrak{sl}_N)$ on
the tensor representation $V(\lambda_1,a)\otimes V(\lambda_{N-1},b)$, we need the coproduct in $Y(\mathfrak{sl}_N)$. Since an explicit expression of coproduct for Drinfeld's new realization (\cite{D2})
is extremely difficult, we try to use the modified principal generators $T^{(j)}_{i}$, $J(T^{(j)}_{i})$ where
$i,j=1,\cdots, N$ and $(i,j)\ne (1,1)$. Then
we get our main result as follows.

\begin{theorem}\label{th:main}
The action of $Y(\mathfrak{sl}_N)$ on the tensor representation $V(\lambda_1,a)\otimes V(\lambda_{N-1},b)$ can be expressed in the action of the
principal basis $J(T_{i}^{(j)})$ on the maximally entangled states $\Psi^{(m)}_{k}$ $(k,m=1,2,\cdots, N)$ in a simple way. The explicit action is given by the
following equation:
\begin{align}\nonumber
J(T_{i}^{(j)})\Psi^{(m)}_{k}&=[a\omega^{(j-1)(k-1)}-b\omega^{(i-1)(m-1)}\\
&+\frac{N}{2}(\delta_{i+k,2}\delta_{j+m,2}-\delta_{k,1}\delta_{m,1})\omega^{(j-1)(k-1)}]\Psi^{(m+j-1)}_{k+i-1},\label{eq:main}
\end{align}
for any $i,j,k,m=1,\cdots, N, (i,j)\ne (1,1)$.
\end{theorem}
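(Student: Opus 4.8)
The plan is to compute $J(T_i^{(j)})\Psi_k^{(m)}$ directly from the coproduct in Lemma~\ref{coproduct}, exploiting the fact that on each tensor factor the Yangian generator $J(x)$ acts simply by a scalar multiple of $x$. On $V(\lambda_1,a)\otimes V(\lambda_{N-1},b)$ the operator $\Delta(J(T_i^{(j)}))$ splits into a \emph{primitive} part $a\,(T_i^{(j)}\otimes 1)+b\,(1\otimes T_i^{(j)})$, using $J(x)\mapsto ax$ on the fundamental factor and $J(x)\mapsto bx$ on the dual factor (\cite{CP2}), plus the \emph{non-primitive} part
\[
\frac{1}{2N}\sum_{(k',l')\ne(0,0)}(\omega^{k'(l'+j-1)}-\omega^{l'(k'+i-1)})\,T_{k'+i}^{(l'+j)}\otimes T_{-k'+1}^{(-l'+1)}.
\]
As preparation I would record the action of the modified principal basis on basis vectors in both factors. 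Writing the fundamental basis in $\mathbb{Z}_N$-labels, $T_I^{(J)}$ acts on $V(\lambda_1)$ by a shift $|c\rangle\mapsto(\text{phase})\,|c-(J-1)\rangle$ and on the dual $V(\lambda_{N-1})$ by $|d\rangle\mapsto-(\text{phase})\,|d+(J-1)\rangle$, the phases being monomials in $\omega$ read off from \eqref{fomula2.1} and \eqref{modified principal basis}. I would also rewrite the Bell states \eqref{eq3.13} as a discrete Fourier sum $\Psi_k^{(m)}=\sum_{r}\omega^{(k-1)(r-1)}|r\rangle\otimes|m+r-1\rangle$ so that these shifts interact transparently.

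Next I would treat the two primitive terms. A single change of summation variable shows that $T_i^{(j)}\otimes 1$ and $1\otimes T_i^{(j)}$ each send $\Psi_k^{(m)}$ to a scalar multiple of the \emph{same} target state $\Psi_{k+i-1}^{(m+j-1)}$: after re-indexing, the shift on the first factor and the dual label on the second conspire to reproduce a Bell state with lower index raised by $i-1$ and upper index raised by $j-1$. Collecting the scalars yields precisely the terms $a\,\omega^{(j-1)(k-1)}$ and $-b\,\omega^{(i-1)(m-1)}$ of \eqref{eq:main}.

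The main work, and the principal obstacle, is the non-primitive sum. First I would verify the crucial structural fact that every summand $T_{k'+i}^{(l'+j)}\otimes T_{-k'+1}^{(-l'+1)}$ maps $\Psi_k^{(m)}$ into a scalar multiple of the \emph{same} state $\Psi_{k+i-1}^{(m+j-1)}$, independent of $(k',l')$: the lower-index shift $k'+i-1$ from the first factor is exactly cancelled by the shift $-k'$ from the second, and likewise for the upper index, which is precisely why the coproduct in Lemma~\ref{coproduct} is organized with these indices. Consequently the whole non-primitive part is a scalar times $\Psi_{k+i-1}^{(m+j-1)}$, and computing that scalar reduces to a double character sum over $\mathbb{Z}_N^2$. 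After expanding the two monomials against the accumulated phases, the $(k',l')$-dependence in each of the two resulting sums becomes purely linear, so by the orthogonality relation, $\sum_{n\in\mathbb{Z}_N}\omega^{\alpha n}$ equals $N$ when $\alpha\equiv 0\pmod N$ and $0$ otherwise, the sums collapse to $N^2\delta_{k,1}\delta_{m,1}$ and $N^2\delta_{i+k,2}\delta_{j+m,2}$ respectively, while the two $(-1)$ contributions from excluding $(0,0)$ cancel against each other. Dividing by $2N$ produces exactly the term $\tfrac{N}{2}(\delta_{i+k,2}\delta_{j+m,2}-\delta_{k,1}\delta_{m,1})\,\omega^{(j-1)(k-1)}$.

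The hard part is therefore the computational bookkeeping: tracking the $\omega$-phases correctly through the two factor actions and the variable substitutions, and recognizing that the double sum is a pair of Gauss-type character sums. The delicate points to check are the phase conventions, so that a uniform monomial factor emerges matching all three terms; the mod-$N$ interpretation of the Kronecker deltas; and the excluded index $(i,j)=(1,1)$ together with the trivial summand $(k',l')=(0,0)$, so that the two delta terms in \eqref{eq:main} are correctly identified with the distinguished state $\Psi_1^{(1)}$ spanning $V_0$ and its image.
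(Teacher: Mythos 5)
Your proposal is correct and follows essentially the same route as the paper: it splits $\Delta(J(T_i^{(j)}))$ via Lemma~\ref{coproduct} into the primitive part (computed as in Lemma~\ref{lemma3.5}) and the non-primitive sum, which is evaluated exactly as in Lemma~\ref{lemma3.6} by observing that every summand hits the same target state $\Psi^{(m+j-1)}_{k+i-1}$ and then collapsing the double character sum over $\mathbb{Z}_N^2$ to the two Kronecker deltas with the $(0,0)$-exclusion terms cancelling. No gaps; this is the paper's argument.
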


\begin{remark}
{\rm The above result was first conjectured in \cite{GBXZ} in a slightly different form. Our formula (\ref{eq:main}) is in fact an equivalent expression.}
\end{remark}

\begin{remark}
{\rm The actions of $J$-parts of $Y(\mathfrak{sl}_N)$ in terms of principal generators behave much like shift operators permuting maximally entangled states or the generalized Bell states, whereas the actions of the Lie algebra $\mathfrak{sl}_N$ do not enjoy this property. This means that the Yangian actions single out the maximally entangled states. We do not know yet the meaning of the spectral parameter in quantum computation.}
\end{remark}

Before we prove the theorem we need the following lemmas.
\begin{lemma}\label{lemma3.4}
On the $\mathfrak{sl}_N$-modules $V(\lambda_1)$ and $V(\lambda_{N-1})$, we have
\begin{equation*}
T^{(j)}_{i}|m\rangle_1= \omega^{(i-1)(m-j)}|m-j+1\rangle_1,
\end{equation*}
\begin{equation*}
T^{(j)}_{i}|m\rangle_2= -\omega^{(i-1)(m-1)}|m+j-1\rangle_2.
\end{equation*}
\end{lemma}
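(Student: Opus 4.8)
The plan is to reduce both statements to a single direct computation by unwinding the definition of the modified principal basis down to the level of the Cartan–Weyl unit matrices, whose action on $V(\lambda_1)$ and $V(\lambda_{N-1})$ is known. First I would combine \eqref{modified principal basis} with \eqref{fomula2.1} to write
\[
T^{(j)}_{i}=\omega^{-(i-1)}A_{i-1,j-1}=\omega^{-(i-1)}\sum_{k\in\mathbb{Z}_N}\omega^{k(i-1)}E_{k,k+j-1},
\]
so that $T^{(j)}_{i}$ becomes an explicit $\mathbb{Z}_N$-indexed combination of the $E_{k,k+j-1}$. Once this is in hand, both formulas follow from how a single unit matrix $E_{ab}$ acts, since the sum collapses against a Kronecker delta.

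For the fundamental module $V(\lambda_1)$ I would use the standard action $E_{ab}|p\rangle_1=\delta_{b,p}|a\rangle_1$, with all indices read modulo $N$. Substituting into the sum, the delta $\delta_{k+j-1,m}$ forces $k=m-j+1$, leaving the single surviving term
\[
T^{(j)}_{i}|m\rangle_1=\omega^{-(i-1)}\,\omega^{(m-j+1)(i-1)}|m-j+1\rangle_1=\omega^{(i-1)(m-j)}|m-j+1\rangle_1,
\]
which is precisely the first asserted identity.

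For the dual module $V(\lambda_{N-1})$ I would use the contragredient (negative-transpose) action $x\mapsto -x^{\mathrm{t}}$, under which $E_{ab}$ sends $|p\rangle_2\mapsto -\delta_{a,p}|b\rangle_2$. Here the delta $\delta_{k,m}$ collapses the sum at $k=m$ rather than at a shifted index, and the accompanying minus sign survives, producing
\[
T^{(j)}_{i}|m\rangle_2=-\omega^{-(i-1)}\,\omega^{m(i-1)}|m+j-1\rangle_2=-\omega^{(i-1)(m-1)}|m+j-1\rangle_2,
\]
the second identity. The extra sign and the differing $\omega$-phase relative to the first computation are exactly the footprints of the dual action.

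The computation carries no conceptual difficulty; the one place where care is essential — and where sign or shift errors would otherwise creep in — is the bookkeeping of the two index conventions running side by side: the $\mathbb{Z}_N$-labelling $\{0,1,\dots,N-1\}$ used for $A_{ij}$ and $E_{k,k+j}$ versus the $\{1,\dots,N\}$-labelling of the basis vectors $|m\rangle$, together with the uniform shift by one built into the definition of $T^{(j)}_{i}$. Keeping every subscript reduced modulo $N$ and tracking how the prefactor $\omega^{-(i-1)}$ cancels against the summation phase $\omega^{k(i-1)}$ is what makes the two right-hand sides emerge in the stated normalized form.
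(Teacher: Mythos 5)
Your proof is correct and follows essentially the same route as the paper: expand $T^{(j)}_{i}$ via \eqref{modified principal basis} and \eqref{fomula2.1} into a sum of unit matrices $E_{k,k+j-1}$, collapse the sum against the Kronecker delta on $V(\lambda_1)$, and use the contragredient action $E_{ab}|p\rangle_2=-\delta_{a,p}|b\rangle_2$ for the dual module. The only difference is cosmetic: the paper leaves the dual case to the reader (``similarly''), whereas you spell it out, and your sign and phase bookkeeping there is correct.
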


\begin{proof}
From Eq. (\ref{fomula2.1}), we have
\begin{equation*}
T^{(j)}_{i}= \sum_{k}\omega^{(i-1)(k-1)}E_{k,k+j-1}.
\end{equation*}
So on $V(\lambda_1)$, $T^{(j)}_{i}$ acts on the basis elements $|m\rangle_1$ of $V(\lambda_1)$ as follows.
\begin{equation*}
\begin{aligned}
T^{(j)}_{i}|m\rangle_1&= \sum_{k}\omega^{(i-1)(k-1)}E_{k,k+j-1}|m\rangle_1\\
&=\sum_{k}\omega^{(i-1)(k-1)}\delta_{k,m-j+1}|k\rangle_1=\omega^{(i-1)(m-j)}|m-j+1\rangle_1.\\
\end{aligned}
\end{equation*}
Since $V(\lambda_{N-1})$ can be viewed as the dual representation of $V(\lambda_{1})$,
we can derive the action of $T^{(j)}_{i}$ on the basis elements $|m\rangle_2$ similarly.
\end{proof}

\begin{lemma}\label{lemma3.5}
In the representation $V(\lambda_1,a)\otimes V(\lambda_{N-1},b)$ of $Y(\mathfrak{sl}_N)$, we have
\begin{equation}
(J(T^{(j)}_{i})\otimes 1)\Psi^{(m)}_{k}=a\omega^{(j-1)(k-1)}\Psi^{(m+j-1)}_{k+i-1},
\end{equation}

\begin{equation}\label{equation3.15}
(1\otimes J(T^{(j)}_{i}))\Psi^{(m)}_{k}=-b\omega^{(m-1)(i-1)}\Psi^{(m+j-1)}_{k+i-1}.
\end{equation}
\end{lemma}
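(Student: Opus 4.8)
The plan is to reduce everything to the Lie-algebra action via Lemma~\ref{lemma3.4}, since the $J$-action on a single fundamental factor is merely a scalar. Recall from \cite{CP2} (as stated just above) that $J(x)$ acts as $ax$ on $V(\lambda_1,a)$ and as $bx$ on $V(\lambda_{N-1},b)$. Hence $J(T^{(j)}_i)\otimes 1$ acts on the tensor module exactly as $a\,(T^{(j)}_i\otimes 1)$, and $1\otimes J(T^{(j)}_i)$ as $b\,(1\otimes T^{(j)}_i)$. It therefore suffices to compute $(T^{(j)}_i\otimes 1)\Psi^{(m)}_k$ and $(1\otimes T^{(j)}_i)\Psi^{(m)}_k$ and to check that each is a scalar multiple of a single generalized Bell state.

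For the first identity, I would expand $\Psi^{(m)}_k=\sum_{r=1}^N\omega^{(k-1)(r-1)}|r,m+r-1\rangle$ and apply the first formula of Lemma~\ref{lemma3.4} to the left tensor factor, giving $(T^{(j)}_i\otimes 1)|r,m+r-1\rangle=\omega^{(i-1)(r-j)}|r-j+1,m+r-1\rangle$. Summing, I would reindex by the new first coordinate $s=r-j+1$; the second coordinate becomes $(m+j-1)+s-1$, so the resulting vector is automatically of the form prescribed by $\Psi^{(m+j-1)}_{\bullet}$. The remaining task is purely to collect the $\omega$-exponent $(k-1)(r-1)+(i-1)(r-j)$ after substituting $r=s+j-1$: it splits as $(k+i-2)(s-1)$ plus the $s$-independent term $(k-1)(j-1)$. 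The first piece identifies the shifted lower index as $k+i-1$, while the second produces the overall phase $\omega^{(j-1)(k-1)}$, yielding the claimed equation after restoring the factor $a$.

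The second identity is entirely parallel: I would apply the second formula of Lemma~\ref{lemma3.4} to the right factor, $(1\otimes T^{(j)}_i)|r,m+r-1\rangle=-\omega^{(i-1)(m+r-2)}|r,(m+j-1)+r-1\rangle$, so no reindexing of the first coordinate is needed. Writing $m+r-2=(r-1)+(m-1)$ separates the exponent $(k-1)(r-1)+(i-1)(m+r-2)$ into $(k+i-2)(r-1)$ and the constant $(i-1)(m-1)$, again giving lower index $k+i-1$ and the overall scalar $-\omega^{(m-1)(i-1)}$, which after the factor $b$ is precisely \eqref{equation3.15}. The only genuine obstacle is the $\mathbb{Z}_N$-index bookkeeping: keeping the two shift indices and the four exponent contributions straight so that all terms collapse cleanly onto a single $\Psi^{(m+j-1)}_{k+i-1}$. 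Once the reindexing convention is fixed both computations are routine, and the sign in the second case is inherited directly from the minus sign in Lemma~\ref{lemma3.4}.
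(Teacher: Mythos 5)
Your proposal is correct and follows essentially the same route as the paper: expand $\Psi^{(m)}_k$ in the product basis, replace $J(T^{(j)}_i)$ by $a\,T^{(j)}_i$ (resp.\ $b\,T^{(j)}_i$) on the relevant factor, apply Lemma~\ref{lemma3.4}, and reindex to recollect the sum into a single generalized Bell state with phase $\omega^{(j-1)(k-1)}$ (resp.\ $-\omega^{(i-1)(m-1)}$). The exponent bookkeeping you describe matches the paper's computation exactly.
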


\begin{proof} These two formulae are proved in exactly the same way, so we only give the first one.
On the tensor product $V(\lambda_1,a)\otimes V(\lambda_{N-1},b)$ we have
\begin{equation*}
\begin{aligned}
(J(T^{(j)}_{i})\otimes 1)\Psi^{(m)}_{k}=\sum^{N}_{r=1}\omega^{(k-1)(r-1)}J(T^{(j)}_{i})|r\rangle_1\otimes |m+r-1\rangle_2.
\end{aligned}
\end{equation*}
Using Lemma \ref{lemma3.4} we can get
\begin{equation}\label{eq3.16}
\begin{aligned}
(J(T^{(j)}_{i})\otimes 1)\Psi^{(m)}_{k}
=a\sum^{N}_{r=1}\omega^{(k-1)(r-1)+(i-1)(r-j)}|r-j+1\rangle_1\otimes |m+r-1\rangle_2.
\end{aligned}
\end{equation}
The right hand side of Eq. (\ref{eq3.16}) is equal to
$$a\omega^{(k-1)(j-1)}\sum^{N}_{r=1}\omega^{(k+i-2)(r-1)}|r\rangle_1\otimes |m+j+r-2\rangle_2,$$
which is just $a\omega^{(j-1)(k-1)}\Psi^{(m+j-1)}_{k+i-1}$.
\end{proof}

\begin{lemma}\label{lemma3.6}
\begin{equation*}
\begin{aligned}
&\frac{1}{2N}\sum_{(k',l')\in \mathbb{Z}_{N}^{2}\setminus\{0,0\}}(\omega^{k'(l'+j-1)}-\omega^{l'(k'+i-1)})(T^{(l'+j)}_{k'+i}\otimes T^{(-l'+1)}_{-k'+1})\Psi_{k}^{(m)}\\
&=\frac{N}{2}\omega^{(j-1)(k-1)}(\delta_{i+k-1,1}\delta_{j+m-1,1}-\delta_{k,1}\delta_{m,1})\Psi^{(m+j-1)}_{k+i-1}
\end{aligned}
\end{equation*}
\end{lemma}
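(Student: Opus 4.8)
The plan is to expand the left-hand side by applying the operator $T^{(l'+j)}_{k'+i}\otimes T^{(-l'+1)}_{-k'+1}$ to $\Psi^{(m)}_k=\sum_{r=1}^N\omega^{(k-1)(r-1)}|r\rangle_1\otimes|m+r-1\rangle_2$ via Lemma \ref{lemma3.4}, and then to collapse the resulting double sum over $(k',l')$ using character orthogonality on $\mathbb{Z}_N$, namely $\sum_{k'\in\mathbb{Z}_N}\omega^{k'n}=N\delta_{n\equiv 0}$. First I would apply the two formulas of Lemma \ref{lemma3.4} term by term; since $T^{(p)}_q|r\rangle_1=\omega^{(q-1)(r-p)}|r-p+1\rangle_1$ and $T^{(s)}_t|n\rangle_2=-\omega^{(t-1)(n-1)}|n+s-1\rangle_2$, this produces a single sum over $r$ of basis vectors $|r-l'-j+1\rangle_1\otimes|m+r-l'-1\rangle_2$ with an explicit $\omega$-monomial coefficient, carrying the overall minus sign from the dual action throughout.

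A first simplification I would record is that the weight $\omega^{k'(l'+j-1)}-\omega^{l'(k'+i-1)}$ vanishes at $(k',l')=(0,0)$, so the restriction $(k',l')\ne(0,0)$ may be dropped and the outer sum extended to all of $\mathbb{Z}_N^2$. I would then split the whole expression into the two pieces coming from $\omega^{k'(l'+j-1)}$ and from $-\omega^{l'(k'+i-1)}$ and treat each separately. In each piece the total exponent of $\omega$ is linear in $k'$ after a short rearrangement, so summing over $k'$ imposes a Kronecker constraint: the first piece survives only when $m\equiv 1$, while the second survives only when $j+m\equiv 2$, that is $j+m-1\equiv 1\pmod N$.

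Next I would carry out the remaining summations. In the first piece the substitution $s=r-l'$ eliminates $l'$ and factorizes the exponent, after which $\sum_r\omega^{(k-1)(r-1)}=N\delta_{k,1}$; reindexing the surviving vectors recognizes them as $-\tfrac{N}{2}\,\delta_{k,1}\delta_{m,1}\,\Psi^{(m+j-1)}_{k+i-1}$, where I may freely insert $\omega^{(j-1)(k-1)}=1$ since $k=1$ on the support. In the second piece, once $k'$ has been summed out the coefficient no longer depends on $l'$, so $\sum_{l'}|r-l'-j+1\rangle_1\otimes|m+r-l'-1\rangle_2=\sum_p|p\rangle_1\otimes|p\rangle_2$ using $j+m\equiv 2$, and then $\sum_r$ produces $N\omega^{-(i-1)(j-1)}\delta_{k+i-1,1}$. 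Identifying $\sum_p|p\rangle_1\otimes|p\rangle_2=\Psi^{(1)}_1=\Psi^{(m+j-1)}_{k+i-1}$ under the two constraints gives $+\tfrac{N}{2}\,\omega^{-(i-1)(j-1)}\,\delta_{i+k-1,1}\delta_{j+m-1,1}\,\Psi^{(m+j-1)}_{k+i-1}$. Adding the two pieces yields the stated formula.

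The work here is essentially bookkeeping, and the main obstacle is organizing the three linear exponents in $k',l',r$ so that the orthogonality sums can be applied in the correct order, while keeping track of the index shifts in the kets through the successive changes of variable. The one genuinely delicate point is the final phase match: the second piece naturally carries the factor $\omega^{-(i-1)(j-1)}$, whereas the target coefficient has $\omega^{(j-1)(k-1)}$. These coincide only on the support of the surviving delta $\delta_{i+k-1,1}$, which forces $k-1\equiv 1-i\pmod N$ and hence $(j-1)(k-1)\equiv-(i-1)(j-1)$. I would flag this reduction explicitly, as it is precisely what reconciles the computed coefficient with the one in the statement.
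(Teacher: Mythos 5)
Your proof is correct and follows essentially the same route as the paper: apply Lemma \ref{lemma3.4}, split the sum according to the two exponential weights, and collapse each piece by character orthogonality on $\mathbb{Z}_N$, with the $(k',l')=(0,0)$ exclusion handled harmlessly (you drop it up front since the weight vanishes there; the paper adds and subtracts the term and lets the two $-1$'s cancel). The only difference is the order of summation — you sum over $k'$ before recognizing the resulting vector as $\Psi^{(m+j-1)}_{k+i-1}$, while the paper first evaluates the operator on $\Psi^{(m)}_k$ as a scalar multiple of $\Psi^{(m+j-1)}_{k+i-1}$ for fixed $(k',l')$ — and your final phase reconciliation $\omega^{-(i-1)(j-1)}=\omega^{(j-1)(k-1)}$ on the support of $\delta_{i+k-1,1}$ is exactly right.
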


\begin{proof}
Using Eq. (\ref{eq3.13}), we obtain the following equation:
\begin{equation}
\begin{aligned}\label{eq3.17}
&(T^{(l'+j)}_{k'+i}\otimes T^{(-l'+1)}_{-k'+1})\Psi^{(m)}_{k}=\sum^{N}_{r=1} \omega^{(k-1)(r-1)}T^{(l'+j)}_{k'+i}|r\rangle_1\otimes T^{(-l'+1)}_{-k'+1})|m+r-1\rangle_2.\\
\end{aligned}
\end{equation}
By Lemma \ref{lemma3.4} we see that the right hand side of Eq. (\ref{eq3.17}) is equal to
\begin{equation*}
-\omega^{(k-k'-1)(j+l'-1)-k'(m-1)}\sum_{r=1}^{N}\omega^{(i+k-2)(r-1)}|r,r+m+j-2\rangle,
\end{equation*}
and this is $-\omega^{(k-k'-1)(j+l'-1)-k'(m-1)}\Phi_{i+k-1}^{(m+j-1)}$.
So we get the following equation:
\begin{equation}
\omega^{k'(l'+j-1)}T^{(l'+j)}_{k'+i}\otimes T^{(-l'+1)}_{-k'+1})\Psi^{(m)}_{k}=-\omega^{(k-1)(j-1)}\cdot\omega^{(k-1)l'+(1-m)k'}\Psi_{i+k-1}^{(m+j-1)},
\end{equation}

\begin{equation}
\omega^{l'(k'+i-1)}T^{(l'+j)}_{k'+i}\otimes T^{(-l'+1)}_{-k'+1})\Psi^{(m)}_{k}=-\omega^{(k-1)(j-1)}\cdot\omega^{(k+i-2)l'+(m+j-2)k'}\Psi_{i+k-1}^{(m+j-1)}.
\end{equation}

Then we have
\begin{equation*}
\begin{aligned}
&\sum_{(k',l')\in \mathbb{Z}_{N}^{2}\setminus\{0,0\}}\omega^{k'(l'+j-1)}(T^{(l'+j)}_{k'+i}\otimes T^{(-l'+1)}_{-k'+1})\Psi_{k}^{(m)}\\
&=-\omega^{(k-1)(j-1)}\sum_{(k',l')\in \mathbb{Z}_{N}^{2}\setminus\{0,0\}}\omega^{(k-1)l'+(1-m)k'}\Psi_{i+k-1}^{(m+j-1)}\\
&=-\omega^{(k-1)(j-1)}(\sum_{(k',l')\in \mathbb{Z}_{N}^{2}}\omega^{(k-1)l'+(1-m)k'}-1)\Psi_{i+k-1}^{(m+j-1)}\\
&=-\omega^{(k-1)(j-1)}(N^2\delta_{k,1}\delta_{m,1}-1)\Psi_{i+k-1}^{(m+j-1)},
\end{aligned}
\end{equation*}
and
\begin{equation*}
\begin{aligned}
&\sum_{(k',l')\in \mathbb{Z}_{N}^{2}\setminus\{0,0\}}\omega^{l'(k'+i-1)}(T^{(l'+j)}_{k'+i}\otimes T^{(-l'+1)}_{-k'+1})\Psi_{k}^{(m)}\\
&=-\omega^{(k-1)(j-1)}\sum_{(k',l')\in \mathbb{Z}_{N}^{2}\setminus\{0,0\}}\omega^{(k+i-2)l'+(m+j-2)k'}\Psi_{i+k-1}^{(m+j-1)}\\
&=-\omega^{(k-1)(j-1)}(\sum_{(k',l')\in \mathbb{Z}_{N}^{2}}\omega^{(k+i-2)l'+(m+j-2)k'}-1)\Psi_{i+k-1}^{(m+j-1)}\\
&=-\omega^{(k-1)(j-1)}(N^2\delta_{k+i-1,1}\delta_{m+j-1,1}-1)\Psi_{i+k-1}^{(m+j-1)}.
\end{aligned}
\end{equation*}
The lemma then follows immediately.
\end{proof}
\vskip.2in

\noindent {\it Proof of Theorem \ref{th:main}}. It is an easy consequence of Eq. (\ref{eq3.12}), Lemma \ref{lemma3.5} and Lemma \ref{lemma3.6}. \hfill $\Box$

\begin{remark}
In the language of quantum computation the basis elements in the tensor product are special states with maximal
entanglement. A pure entangled state can not be factored as a tensor product of two pure states in individual factor.
It is known that they play critical role in quantum computation. In our case the maximal entangled states
are generalized Bell states. We have shown that the principal generators of the Yangian algebra act transitively
on the Bell states which means that Yangian symmetry singles out the maximally entangled states in this case
in a canonical way in terms of Hopf algebra structure and the principal generators.
\end{remark}

From Theorem  \ref{th:main}, it is easy to get the following ``subrepresentation theory" (cf. \cite{CP2}).

\begin{corollary}
With the notations as above, let $W=V(\lambda_1, a)\otimes V(\lambda_{N-1}, b)$. If $|a-b|\ne \frac{N}{2}$, then $W$ is an irreducible
representation of the Yangian $Y(\mathfrak{sl}_N)$. Otherwise, $W$ has an unique proper  $Y(\mathfrak{sl}_N)$-subrepresentation $V$ given as follows.
\begin{enumerate}
\item If $a-b=\frac{N}{2}$, then we have $V\cong V_0$ and $W/V\cong V_{\rm ad}$ as vector spaces.
\item If $a-b=-\frac{N}{2}$, then we have $V\cong V_{\rm ad}$ and $W/V\cong V_{0}$ as vector spaces.
\end{enumerate}
\end{corollary}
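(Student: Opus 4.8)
The plan is to classify all $Y(\mathfrak{sl}_N)$-submodules of $W$ directly from the explicit action in Theorem \ref{th:main}. The key structural reduction is that any $Y(\mathfrak{sl}_N)$-submodule is in particular stable under the operators $T_i^{(j)}$, which generate the subalgebra $\mathfrak{sl}_N$; hence every $Y(\mathfrak{sl}_N)$-submodule is an $\mathfrak{sl}_N$-submodule. Since $W = V_{\rm ad}\oplus V_0$ with $V_{\rm ad}$ irreducible and $V_0$ the inequivalent trivial module, complete reducibility leaves only four $\mathfrak{sl}_N$-submodules: $0$, $V_0$, $V_{\rm ad}$, and $W$. Thus the whole problem reduces to deciding, as a function of $a-b$, which of $V_0$ and $V_{\rm ad}$ is in addition stable under all $J(T_i^{(j)})$.

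First I would test $V_0 = \mathbb{C}\,\Psi_1^{(1)}$. Putting $k=m=1$ in (\ref{eq:main}), using $\omega^0=1$ and $\delta_{i+1,2}\delta_{j+1,2}=\delta_{i,1}\delta_{j,1}=0$ for $(i,j)\ne(1,1)$, I obtain
\[
J(T_i^{(j)})\Psi_1^{(1)} = \left(a-b-\tfrac{N}{2}\right)\Psi_i^{(j)} .
\]
Because $\Psi_i^{(j)}\notin V_0$ for $(i,j)\ne(1,1)$, the line $V_0$ is $J$-stable exactly when the scalar vanishes, i.e.\ when $a-b=N/2$.

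Next I would test $V_{\rm ad}=\operatorname{span}\{\Psi_k^{(m)}:(k,m)\ne(1,1)\}$. Since $J(T_i^{(j)})$ sends $\Psi_k^{(m)}$ to a multiple of $\Psi_{k+i-1}^{(m+j-1)}$, the only way to escape $V_{\rm ad}$ is to land on $\Psi_1^{(1)}$, which forces $i\equiv 2-k$ and $j\equiv 2-m$ in $\mathbb{Z}_N$. For such an exit one has $i+k\equiv 2$, $j+m\equiv 2$, so $\delta_{i+k,2}\delta_{j+m,2}=1$ while $\delta_{k,1}\delta_{m,1}=0$; moreover $i-1\equiv-(k-1)$ and $j-1\equiv-(m-1)$ give $(j-1)(k-1)\equiv(i-1)(m-1)\pmod N$, so the two exponentials in (\ref{eq:main}) coincide at a common nonzero value $\zeta$. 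The exit coefficient therefore collapses to $\left(a-b+\tfrac{N}{2}\right)\zeta$, and all exit coefficients vanish simultaneously precisely when $a-b=-N/2$; hence $V_{\rm ad}$ is $J$-stable exactly in that case.

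Finally I would read off the statement. If $|a-b|\ne N/2$ then neither $a-b\mp N/2$ vanishes, so neither $V_0$ nor $V_{\rm ad}$ is $J$-stable, the only submodules are $0$ and $W$, and $W$ is irreducible. If $a-b=N/2$ then $V_0$ is the unique proper submodule, giving $V\cong V_0$ and $W/V\cong V_{\rm ad}$; if $a-b=-N/2$ then $V_{\rm ad}$ is the unique proper submodule, giving $V\cong V_{\rm ad}$ and $W/V\cong V_0$. Uniqueness is automatic since $a-b=N/2$ and $a-b=-N/2$ cannot hold together. The single computation that needs care is the exit coefficient for $V_{\rm ad}$---especially the exponent identity $(j-1)(k-1)\equiv(i-1)(m-1)$ that yields the clean factor $a-b+N/2$---but this is a brief congruence check rather than a serious obstacle.
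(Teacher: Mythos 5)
Your proof is correct and is essentially the intended argument: the paper states the corollary as an immediate consequence of Theorem \ref{th:main} without writing out the details, and your derivation (reducing to the four $\mathfrak{sl}_N$-submodules $0$, $V_0$, $V_{\rm ad}$, $W$, then computing the entrance coefficient $a-b-\frac{N}{2}$ for $V_0$ and the exit coefficient $(a-b+\frac{N}{2})\zeta$ for $V_{\rm ad}$ via the congruence $(j-1)(k-1)\equiv(i-1)(m-1)\pmod N$) fills them in correctly.
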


\subsection{The quantum number ${\bf J}^2$ and the maximal degree of entanglement} \label{sub3.3}
The Casimir operators of Lie algebras provide quantum numbers in quantum mechanics. The analogues of Casimir operators  ${\bf J}^2$ for the $J$-parts of Yangians also provide
a new kind of quantum numbers. The quantum number ${\bf J}^2$ was introduced in \cite{BGX} for $Y(\mathfrak{sl}_2)$
in the study of the spectrum of hydrogen atom. It was extensively studied and applied in several models such as few-body systems \cite{BGX2}, molecule $\{ V6\}$ and
four-spin Heisenberg chain \cite{PBG}. Most studies of ${\bf J}^2$
 have only been focused on $Y(\mathfrak{sl}_2)$ for the spin-chain. It is now natural to generalize the concept to
 the general case of $Y(\mathfrak{sl}_N)$. Based on Theorem \ref{th:main} we will give an explicit formula of ${\bf J}^2$. The new quantum number ${\bf J}^2$ is expected to be
useful in further development.

First we recall some notations. Since Lie algebras describe the angular momentums $\bf I$, we use the physical notation ${\bf I}^2$ to denote the Casimir operator. In the case
$\mathfrak{sl}_N$ with the principal basis $\{A_{ij}\}$ and its dual basis $\{\frac{\omega^{ij}}{N}A_{-i,-j}\}$, we
see that (cf. Eq.~(\ref{eq3.10}))
  \begin{equation}\label{eq:I}
{\bf I}^2=\sum_{(i,j)\in \mathbb{Z}_{N}^{2}\setminus\{0,0\}}\frac{\omega^{ij}}{N}A_{ij}A_{-i,-j} \in U(\mathfrak{sl}_N).
 \end{equation}
Correspondingly, we introduce a new operator for the Yangian  $Y(\mathfrak{sl}_N)$ as follows.
 \begin{equation}\label{eq:J}
{\bf J}^2=\sum_{(i,j)\in \mathbb{Z}_{N}^{2}\setminus\{0,0\}}\frac{\omega^{ij}}{N}J(A_{ij})J(A_{-i,-j}) \in Y(\mathfrak{sl}_N).
 \end{equation}

\begin{remark}
{Similar to the fact that the Casimir element (or ${\bf I}^2$) is independent from the choice of the bases, we
have that ${\bf J}^2$ is independent from the choice of dual bases.}
\end{remark}

\begin{lemma}\label{le:J} The quantum number ${\bf J}^2$ satisfies the following commutation relations.
$$ [{\bf I}^2, {\bf J}^2]=[{\bf J}^2, A_{ii}]=[{\bf I}^2, A_{ii}]=0 \in Y(\mathfrak{sl}_N), i=1,\cdots, N-1.$$
\end{lemma}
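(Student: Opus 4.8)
The plan is to deduce all three identities from a single structural fact: the element ${\bf J}^2$ commutes with the entire Lie subalgebra $\mathfrak{sl}_N \subset Y(\mathfrak{sl}_N)$. The two defining relations I will lean on are that the bracket computed in $Y(\mathfrak{sl}_N)$ agrees with the bracket in $\mathfrak{sl}_N$ on the subalgebra $U(\mathfrak{sl}_N)$, and that $[x,J(y)]=J([x,y])$, together with the linearity of $J$.

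First I would record that ${\bf I}^2$ is nothing but the quadratic Casimir operator of $\mathfrak{sl}_N$. Comparing (\ref{eq:I}) with the split Casimir $t$ of (\ref{eq3.10}), the element ${\bf I}^2$ is precisely the image of $t$ under the multiplication map $U(\mathfrak{sl}_N)^{\otimes 2}\to U(\mathfrak{sl}_N)$, using that $\{A_{ij}\}$ and $\{\frac{\omega^{ij}}{N}A_{-i,-j}\}$ are dual bases for the trace form. Hence ${\bf I}^2$ is central in $U(\mathfrak{sl}_N)$; since $A_{ii}\in\mathfrak{sl}_N$ for $i=1,\dots,N-1$ and the $Y$-bracket restricts to the $U(\mathfrak{sl}_N)$-bracket, this immediately gives $[{\bf I}^2,A_{ii}]=0$.

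For the two remaining identities I would introduce the linear map $\Phi:\mathfrak{sl}_N\otimes\mathfrak{sl}_N\to Y(\mathfrak{sl}_N)$, $x\otimes y\mapsto J(x)J(y)$, so that by (\ref{eq:J}) we have ${\bf J}^2=\Phi(t)$. The crucial computation is that $\Phi$ intertwines the adjoint action: for any $z\in\mathfrak{sl}_N$, using that $[z,-]$ is a derivation and that $[z,J(w)]=J([z,w])$,
$$[z,J(x)J(y)]=J([z,x])J(y)+J(x)J([z,y])=\Phi\bigl((\mathrm{ad}_z\otimes 1+1\otimes\mathrm{ad}_z)(x\otimes y)\bigr).$$
Applying this with $\Phi(t)$ and invoking the $\mathfrak{sl}_N$-invariance of the split Casimir, namely $(\mathrm{ad}_z\otimes 1+1\otimes\mathrm{ad}_z)(t)=0$, yields $[z,{\bf J}^2]=\Phi(0)=0$ for every $z\in\mathfrak{sl}_N$. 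Taking $z=A_{ii}$ proves $[{\bf J}^2,A_{ii}]=0$. Finally, since ${\bf J}^2$ commutes with every element of $\mathfrak{sl}_N$, it commutes with the whole subalgebra $U(\mathfrak{sl}_N)\subset Y(\mathfrak{sl}_N)$ (a commutator with a product expands by the Leibniz rule into commutators with the individual factors), and as ${\bf I}^2\in U(\mathfrak{sl}_N)$ this gives $[{\bf I}^2,{\bf J}^2]=0$.

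The only point requiring genuine care, and the main obstacle, is justifying the invariance $(\mathrm{ad}_z\otimes 1+1\otimes\mathrm{ad}_z)(t)=0$. I would obtain this from the invariance of the trace form $(x|y)=\mathrm{tr}(xy)$, equivalently from the already-noted duality of the bases $\{A_{ij}\}$ and $\{\frac{\omega^{ij}}{N}A_{-i,-j}\}$, rather than expanding the structure constants $[A_{ab},A_{cd}]=(\omega^{bc}-\omega^{ad})A_{a+c,b+d}$ termwise; phrasing the argument through the invariant element $t$ keeps the noncommutativity of $Y(\mathfrak{sl}_N)$ from interfering, since the ordering of the two $J$-factors in $\Phi$ is preserved throughout.
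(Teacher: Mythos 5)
Your proposal is correct, and it takes a genuinely different route from the paper. The paper proves only $[A_{mm},{\bf J}^2]=0$ in detail, by expanding the sum defining ${\bf J}^2$, inserting the structure constants $[A_{mm},A_{ij}]=(\omega^{im}-\omega^{jm})A_{m+i,m+j}$ of the principal basis, and exhibiting a termwise cancellation after reindexing $(i,j)\mapsto(i+m,j+m)$; the other two identities are dismissed as ``shown similarly.'' You instead package ${\bf J}^2$ as $\Phi(t)$ for the invariant split Casimir $t$ and the equivariant map $\Phi(x\otimes y)=J(x)J(y)$, and derive everything from the single invariance $(\mathrm{ad}_z\otimes 1+1\otimes\mathrm{ad}_z)(t)=0$. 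This buys you a strictly stronger statement -- $[z,{\bf J}^2]=0$ for \emph{every} $z\in\mathfrak{sl}_N$, not just the principal Cartan elements $A_{ii}$ -- from which $[{\bf I}^2,{\bf J}^2]=0$ follows by Leibniz rather than by a second ad hoc computation (where the paper's ``similarly'' is in fact least convincing, since ${\bf I}^2$ is quadratic). Your argument also makes transparent the paper's subsequent remark that ${\bf J}^2$ is independent of the choice of dual bases, since $t$ itself is. The paper's computation, by contrast, stays entirely inside the principal-basis formalism and requires no appeal to the invariance of $t$, which is the one ingredient you quote without writing out; that step is standard (it is exactly the duality $c_{\lambda\mu}=-d_{\mu\lambda}$ coming from $([z,x]|y)=-(x|[z,y])$), so I would not count it as a gap, but you should include the two-line verification if you write this up.
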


\begin{proof} Since all identities are shown similarly, we only give the proof for $[{\bf J}^2, A_{ii}]=0$.
For a fixed $m$ we have
\begin{eqnarray*}
A_{mm}\frac{\omega^{ij}}{N}J(A_{ij})J(A_{-i,-j})&=&\frac{\omega^{ij}}{N} (J(A_{ij})A_{mm}J(A_{-i,-j})+[A_{mm}, J(A_{ij})]J(A_{-i,-j}))\\
&=& \frac{\omega^{ij}}{N} (J(A_{ij}J(A_{-i,-j})A_{mm}+J(A_{ij})[A_{mm}, J(A_{-i,-j})]\\
&\mbox{}& +[A_{mm}, J(A_{ij})]J(A_{-i,-j})).
\end{eqnarray*}
Consequently it follows that
\begin{eqnarray*}
[A_{mm},{\bf J}^2]&=&\sum_{(i,j)\in \mathbb{Z}_{N}^{2}\setminus\{0,0\}}\frac{\omega^{ij}}{N}( J(A_{ij})J([A_{mm},J(A_{-i,-j})])+J([A_{mm},A_{ij}])J(A_{-i,-j}))\\
&=& \sum_{(i,j)\in \mathbb{Z}_{N}^{2}\setminus\{0,0\}} \frac{\omega^{ij}}{N}[ (\omega^{im}-\omega^{jm})J(A_{m+i,m+j})J(A_{-i,-j})\\
&\mbox{}& +(\omega^{-im}-\omega^{-jm})J(A_{ij})J(A_{m-i,m-j})]\\
&=&  \sum_{(i,j)\in \mathbb{Z}_{N}^{2}\setminus\{0,0\}} \frac{\omega^{ij}}{N}  (\omega^{im}-\omega^{jm})J(A_{m+i,m+j})J(A_{-i,-j})\\
&\mbox{}& + \sum_{(i,j)\in \mathbb{Z}_{N}^{2}\setminus\{0,0\}} \frac{\omega^{(i+m)(j+m)}}{N} (\omega^{-(i+m)m}-\omega^{-(j+m)m} )J(A_{m+i,m+j})J(A_{-i,-j})\\
&=&0.\end{eqnarray*}
\end{proof}

\begin{remark} The above result was proved for $Y(\mathfrak{sl}_2)$ in \cite{BGX, BGX2}.
Note that $\{A_{ii}|i=1,\cdots, N-1\}$ is a basis of the Cartan subalgebra of $Y(\mathfrak{sl}_N)$.
It is known that degenerate eigenstates under the conservation usually make things complicated.
We can take $\{{\bf
I}^2,{\bf J}^2, A_{ii}, i=1,\cdots N-1\}$ as a generalized conserved set for studying physical models.  This will help
differentiate degenerate states. For example,
in the 3 spin-$1/2$ system under the action of $\mathfrak{sl}_2$, there are two degenerate states of total spin $1/2$
with respect to $\{{\bf I}^2, A_{11}\}$. By considering the new quantum number ${\bf J}^2$, these degenerate states
become non-degenerate \cite{BGX2}. This shows that ${\bf J}^2$ can distinguish eigenstates that appear degenerate under
the usual conserved set.
\end{remark}

The following conclusion is a direct consequence of Lemma \ref{le:J}.

\begin{proposition} The quantum number ${\bf J}^2$ acts on $V_{\rm ad}$ or $V_0$ as a scalar multiplication
explicitly as follows.
\begin{equation}\label{eq:J2}
{\bf J}^2\cdot\Psi^{(m)}_{k}=\left\{
                      \begin{array}{ll}
                        (\frac{(N^2-1)(a^2+b^2)}{N}-\frac{N(N^2-1)}{4}-\frac{2ab(N^2-1)}{N})\Psi^{(m)}_{k}, & \hbox{(k,m)=(1,1);} \\
                        (\frac{(N^2-1)(a^2+b^2)}{N}-\frac{N}{4}+\frac{2ab}{N})\Psi^{(m)}_{k}, & \hbox{(k,m)$\neq$(1,1).}
                      \end{array}
                    \right.
\end{equation}
\end{proposition}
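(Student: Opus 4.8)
The plan is to first reduce the statement to the computation of two scalars, and then evaluate those scalars by applying Theorem \ref{th:main} twice. For the reduction, I would observe that the argument of Lemma \ref{le:J} is not special to the Cartan elements $A_{mm}$: replacing $A_{mm}$ by an arbitrary $x\in\mathfrak{sl}_N$ and using $[x,J(y)]=J([x,y])$ shows that $[x,{\bf J}^2]$ is the image under the map $a\otimes b\mapsto J(a)J(b)$ of $(\mathrm{ad}_x\otimes 1+1\otimes\mathrm{ad}_x)(t)$, where $t$ is the Casimir tensor of (\ref{eq3.10}). Since $t$ is $\mathfrak{sl}_N$-invariant this vanishes, so ${\bf J}^2$ commutes with all of $\mathfrak{sl}_N$. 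As $V(\lambda_1)\otimes V(\lambda_{N-1})=V_{\rm ad}\oplus V_0$ with both summands irreducible over $\mathfrak{sl}_N$, Schur's lemma forces ${\bf J}^2$ to act as a scalar on each. Hence it suffices to evaluate ${\bf J}^2\Psi^{(1)}_1$ (the essentially unique vector of $V_0$) and ${\bf J}^2\Psi^{(m)}_k$ for one representative with $(k,m)\ne(1,1)$; independence of the latter from the chosen representative is a built-in consistency check.

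For the evaluation I would rewrite ${\bf J}^2$ in the modified principal basis. Using $A_{ij}=\omega^{i}T^{(j+1)}_{i+1}$ and reindexing $(i,j)\mapsto(i-1,j-1)$, the definition (\ref{eq:J}) becomes
\begin{equation*}
{\bf J}^2=\sum_{(i,j)\in\mathbb{Z}_N^2\setminus\{(1,1)\}}\frac{\omega^{(i-1)(j-1)}}{N}\,J(T^{(j)}_{i})\,J(T^{(2-j)}_{2-i}).
\end{equation*}
Now apply Theorem \ref{th:main} twice to a fixed $\Psi^{(m)}_k$: the inner factor $J(T^{(2-j)}_{2-i})$ sends $\Psi^{(m)}_k$ to a scalar multiple of $\Psi^{(m+1-j)}_{k+1-i}$, and the outer factor $J(T^{(j)}_{i})$ sends that back to a multiple of $\Psi^{(m)}_k$. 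Thus every summand is diagonal on $\Psi^{(m)}_k$ (re-confirming the scalar action directly), and ${\bf J}^2\Psi^{(m)}_k=\lambda(k,m)\Psi^{(m)}_k$, where $\lambda(k,m)$ is an explicit double sum over $\mathbb{Z}_N^2\setminus\{(1,1)\}$ of products of the coefficient appearing in (\ref{eq:main}).

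The remaining work, and the main obstacle, is the bookkeeping of this double sum. Each coefficient from (\ref{eq:main}) splits into an ``$a,b$ part'' and a ``$\tfrac{N}{2}$-delta part,'' so the product expands into nine pieces, and I expect every power of $\omega$ that occurs to collapse. The $a^2$ and $b^2$ pieces have exponent identically $0$ and contribute $\frac{(N^2-1)}{N}a^2$ and $\frac{(N^2-1)}{N}b^2$; the two $ab$ pieces reduce, after the substitution $p=i-1,q=j-1$, to sums $\sum_{(p,q)}\omega^{q(k-1)-p(m-1)}$ evaluated by the orthogonality relation $\sum_{p\in\mathbb{Z}_N}\omega^{p(k-1)}=N\delta_{k,1}$, yielding $-\frac{2ab}{N}(N^2\delta_{k,1}\delta_{m,1}-1)$. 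The four cross pieces between the ``$a,b$'' and ``delta'' parts all carry exponent $0$ and cancel in pairs, so the linear-in-$a$ and linear-in-$b$ contributions vanish, consistent with the absence of such terms in (\ref{eq:J2}); the delta--delta piece reduces to $\frac{N}{4}\sum(-(\delta_{k,1}\delta_{m,1}-\delta_{k,i}\delta_{m,j})^2)$, a pure counting computation giving $-\frac{N(N^2-1)}{4}$ when $(k,m)=(1,1)$ and $-\frac{N}{4}$ otherwise. Assembling these pieces reproduces exactly the two values in (\ref{eq:J2}). The delicate points are keeping the reindexing $i\mapsto 2-i$, $j\mapsto 2-j$ consistent throughout and correctly handling the interaction of the Kronecker deltas with the exclusion of $(i,j)=(1,1)$ from the summation range.
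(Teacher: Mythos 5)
Your proposal is correct and follows essentially the same route as the paper: rewrite ${\bf J}^2$ in the modified principal basis as $\sum_{(i,j)\ne(1,1)}\frac{\omega^{(i-1)(j-1)}}{N}J(T_i^{(j)})J(T_{2-i}^{(2-j)})$, apply Theorem \ref{th:main} twice, and evaluate the resulting double sum by orthogonality of characters of $\mathbb{Z}_N$. One minor imprecision: the two $b$--delta cross pieces do not carry exponent $0$ identically, but their exponents vanish on the support of the Kronecker deltas, so the cancellation you claim still goes through; the Schur-lemma reduction you add up front is essentially what the paper invokes by citing Lemma \ref{le:J} before the proposition.
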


\begin{proof}
In Eqs. (\ref{fomula2.1}) and (\ref{modified principal basis}) we have seen that $T_{i}^{(j)}$
and $\frac{\omega^{(i-1)(j-1)}}{N}T_{-i+2}^{(-j+2)}$ are dual base of $\mathfrak{sl}_N$ ($(i, j)\neq (1, 1)$).
Thus,
\begin{equation}
{\bf J}^2=\sum_{(i,j)\ne (1,1)}\frac{\omega^{(i-1)(j-1)}}{N}J(T_{i}^{(j)})J(T_{-i+2}^{(-j+2)}).
\end{equation}
It follows from Theorem \ref{th:main} that
\begin{align}\nonumber
&J(T_{-i+2}^{(-j+2)})\Psi^{(m)}_{k}\\ \label{3.23}
&=[(a+\frac{N}{2}(\delta_{i,k}\delta_{j,m}-\delta_{k,1}\delta_{m,1}))\omega^{(-j+1)(k-1)}-b\omega^{(-i+1)(m-1)}
]\Psi^{(m-j+1)}_{k-i+1},\\ \nonumber
&J(T_{i}^{(j)})\Psi^{(m-j+1)}_{k-i+1}\\ \label{3.24}
&=[(a+\frac{N}{2}(\delta_{k,1}\delta_{m,1}-\delta_{k,i}\delta_{m,j}))\omega^{(j-1)(k-i)}-b\omega^{(i-1)(m-j)}
]\Psi^{(m)}_{k}.
\end{align}
Using Eqs. (\ref{3.23}) and (\ref{3.24}) we have
\begin{align*}
&\omega^{(i-1)(j-1)}J(T_{i}^{(j)})J(T_{-i+2}^{(-j+2)})\Psi^{(m)}_{k}=\omega^{(i-1)(j-1)}\times\\
&[(a+\frac{N}{2}(\delta_{i,k}\delta_{j,m}-\delta_{k,1}\delta_{m,1}))\omega^{(-j+1)(k-1)}-b\omega^{(-i+1)(m-1)}]\times\\
&[(a+\frac{N}{2}(\delta_{k,1}\delta_{m,1}-\delta_{k,i}\delta_{m,j}))\omega^{(j-1)(k-i)}-b\omega^{(i-1)(m-j)}]\Psi^{(m)}_{k}\\
&=[(a+\frac{N}{2}(\delta_{i,k}\delta_{j,m}-\delta_{k,1}\delta_{m,1}))\omega^{(j-1)(i-k)}-b\omega^{(i-1)(j-m)}]\times\\
&[(a+\frac{N}{2}(\delta_{k,1}\delta_{m,1}-\delta_{k,i}\delta_{m,j}))\omega^{(j-1)(k-i)}-b\omega^{(i-1)(m-j)}]\Psi^{(m)}_{k}\\
&=[(a^2-\frac{N^2}{4}(\delta_{i,k}\delta_{j,m}-\delta_{k,1}\delta_{m,1})^2)\Psi^{(m)}_{k}+b^2\Psi^{(m)}_{k}\\
&+(-ab+\frac{bN}{2}(\delta_{i,k}\delta_{j,m}-\delta_{k,1}\delta_{m,1}))\omega^{(j-1)(k-1)-(i-1)(m-1)}\Psi^{(m)}_{k}\\
&+(-ab-\frac{bN}{2}(\delta_{i,k}\delta_{j,m}-\delta_{k,1}\delta_{m,1}))\omega^{(i-1)(m-1)-(j-1)(k-1)}\Psi^{(m)}_{k}\\
&=(a^2+b^2)\Psi^{(m)}_{k}-\frac{N^2}{4}(\delta_{i,k}\delta_{j,m}-\delta_{k,1}\delta_{m,1})^2\Psi^{(m)}_{k}\\
&-ab(\omega^{(j-1)(k-1)-(i-1)(m-1)}+\omega^{(i-1)(m-1)-(j-1)(k-1)})\Psi^{(m)}_{k}\\
&+\frac{bN}{2}(\delta_{i,k}\delta_{j,m}-\delta_{k,1}\delta_{m,1})\omega^{(j-1)(k-1)-(i-1)(m-1)}\Psi^{(m)}_{k}\\
&-\frac{bN}{2}(\delta_{i,k}\delta_{j,m}-\delta_{k,1}\delta_{m,1})\omega^{(i-1)(m-1)-(j-1)(k-1)}\Psi^{(m)}_{k}
\end{align*}
where we have used $\sum\limits_{i=1}^N\omega^{ik}=0$ for any $k$. This completes the proof of Eq.~(\ref{eq:J2}).
\end{proof}

\begin{corollary} The the action of ${\bf J}^2$ on  $V(\lambda_1,a)\otimes V(\lambda_{N-1},b)$ is
a scalar multiplication if and only if $ab=\frac{-N^2+2}{8}$.
\end{corollary}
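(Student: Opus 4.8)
The plan is to read the result straight off the preceding Proposition, where the eigenvalues of ${\bf J}^2$ on the basis $\{\Psi^{(m)}_k\}$ have already been computed in Eq. (\ref{eq:J2}). That computation shows that ${\bf J}^2$ is diagonal in this basis and takes exactly two values: one value, say $\mu_0$, on the single vector $\Psi^{(1)}_1$ spanning $V_0$, and a second value $\mu_{\rm ad}$ on every remaining $\Psi^{(m)}_k$ with $(k,m)\ne(1,1)$, which together span $V_{\rm ad}$. Since for all $N\ge 2$ both $V_0$ and $V_{\rm ad}$ are nonzero (of dimensions $1$ and $N^2-1$ respectively), the operator ${\bf J}^2$ acts as a single scalar on the whole of $V(\lambda_1,a)\otimes V(\lambda_{N-1},b)$ if and only if these two eigenvalues coincide, i.e. $\mu_0=\mu_{\rm ad}$. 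This reduces the corollary to a single scalar equation.

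The remaining step is to solve $\mu_0=\mu_{\rm ad}$ for the parameters. Writing out the two eigenvalues from Eq. (\ref{eq:J2}), the common term $\frac{(N^2-1)(a^2+b^2)}{N}$ cancels, leaving an identity that involves only the $ab$-terms and the $N$-dependent constants. First I would collect all $ab$ contributions on one side and the constants on the other; the coefficients of $ab$ combine as $\frac{2ab}{N}\bigl(1+(N^2-1)\bigr)=2abN$, while the constants combine as $\frac{N}{4}\bigl(1-(N^2-1)\bigr)=\frac{N(2-N^2)}{4}$. The resulting identity $2abN=\frac{N(2-N^2)}{4}$, divided by $N$, yields $ab=\frac{2-N^2}{8}=\frac{-N^2+2}{8}$, which is the asserted condition. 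Conversely, substituting this value back in makes $\mu_0=\mu_{\rm ad}$, so ${\bf J}^2$ is a scalar; this establishes the \emph{if} direction.

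There is essentially no analytic obstacle here: once the Proposition is granted, the statement is an elementary algebraic equivalence and the only real content is the bookkeeping of the coefficients above. The point that deserves explicit care is the logical structure of the \emph{if and only if}: the scalarity criterion is precisely the equality of the two eigenvalues, and this equivalence relies on the fact that both eigenspaces $V_0$ and $V_{\rm ad}$ genuinely occur in the tensor product for every $N\ge 2$. I would therefore note this nondegeneracy explicitly, so that the \emph{only if} direction is not vacuous and the two eigenvalues are indeed both realized.
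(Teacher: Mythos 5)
Your proof is correct and takes essentially the same route as the paper's: both reduce the statement to equating the two eigenvalues given in Eq.~(\ref{eq:J2}) and solving the resulting linear equation for $ab$. Your explicit remark that both eigenspaces $V_0$ and $V_{\rm ad}$ are nonzero, so that the ``only if'' direction is not vacuous, is a small point of care that the paper leaves implicit.
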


\begin{proof}
It follows immediately from the following identity:
$$\frac{(N^2-1)(a^2+b^2)}{N}-\frac{N(N^2-1)}{4}-\frac{2ab(N^2-1)}{N}=\frac{(N^2-1)(a^2+b^2)}{N}-\frac{N}{4}+\frac{2ab}{N}.$$
\end{proof}

\begin{remark}
{\rm If $ab=\frac{-N^2+2}{8}$, then all $N^2$ states $\Psi^{(m)}_{k}$
share the
same eigenvalue
$$\rho=\frac{(N^2-1)(a^2+b^2)}{N}-\frac{N}{2}+\frac{1}{2N}$$
of ${\bf J}^2$, that is, ${\bf J}^2 \Psi_k^{(m)}=\rho  \Psi_k^{(m)}$  for any $k,m=1,\cdots, N$. Since
$\Psi_k^{(m)}$ are of maximal degree of entanglement and all
the wave functions share the same
eigenvalue, one can imagine that the quantum number ${\bf J}^2$ selects
the maximal degree of entanglement. It coincides with the similar phenomenon for the cases $N=2, 3$ \cite{GBXZ}.
}
\end{remark}

\medskip

\centerline{\bf Acknowledgments}
NJ gratefully acknowledges the partial support of Max-Planck Institut f\"ur Mathematik in Bonn, Simons Foundation grant 198129, and NSFC grant 11271138 during this work.
CB and MG thank for the support of NSFC.

\bibliographystyle{amsalpha}

\end{document}